\theoremstyle{plain}
\newtheorem{Theorem}{Theorem}
\newtheorem{Lemma}[Theorem]{Lemma}
\newtheorem{Proposition}[Theorem]{Proposition}
\theoremstyle{definition}
\newtheorem{Definition}[Theorem]{Definition}
\newtheorem{Remark}[Theorem]{Remark}
\newtheorem{Algorithm}[Theorem]{Algorithm}
\title{Novel performant primality test on a Pell's cubic}
\author[L.~Di Domenico]{Luca Di Domenico} 
\address{Dipartimento di Matematica, Università di Trento, Via Sommarive 14, 38123, Povo (TN), Italy}
\email{luca.didomenico99@gmail.com}
\author[N.~Murru]{Nadir Murru} 
\address{Dipartimento di Matematica, Università di Trento, Via Sommarive 14, 38123, Povo (TN), Italy}
\email{nadir.murru@unitn.it}
\date{}
\keywords{primality test; Pell's cubic; Pell's equation; integer sequences;}
\subjclass{11A51, 11Y11}
\begin{document}

\begin{abstract}
    Primality testing is an especially useful topic for public-key cryptography.
    In this paper, a novel primality test algorithm based on the Pell's cubic will be introduced, and its necessary primality conditions will be proved using three integer sequences connected to operations applied in the projectivization of the Pell's cubic.
    The number of operations involved in the test grows linearly with respect to the bit-length $\log_2(n)$ of the input integer $n$.
    The algorithm is deterministic for integers less than $2^{32}$.
\end{abstract}

\maketitle

\section{Introduction}
Primality testing algorithms are procedures which classify a given odd integer $n$ as prime or composite.
If $n$ is classified composite, then $n$ is surely composite, whereas the test is allowed to wrongly classify composites as primes (these misclassified composites are usually called \textit{pseudoprimes}).
Some of these procedures are fast and return few pseudoprimes.

Primality testing is an interesting topic per se, but it can be especially useful in public-key cryptography.
Currently, the most famous and used primality testing algorithms are the Rabin--Miller test \cite{RM} and Baille--PSW \cite{Baillie}, which are mainly based on Fermat and Lucas tests.
Another famous algorithm is the notorious AKS test \cite{AKS} which was proved to be a deterministic and polynomial--time algorithm, but
it is somewhat slower in practice and the first two tests are always preferred in applications. 

The study of existing primality tests as well as the development of new algorithms is still a very interesting and active research field. For instance, in \cite{EP24, EW24}, the authors study the probability to fail of the strong Lucas test, focusing on the average case and in \cite{BFW} the authors provide an improvement of the Baillie-PSW test. Regarding the development of new tests, in \cite{bad}, the study of bad witnesses for the Galois test and the Rabin--Miller test, inspired the authors for deducing a new primality test which is the product of a multiple-rounds Miller-Rabin test by a Galois test. In \cite{Z}, the authors developed an automatized way for searching many Lucas and Perrin like primality tests. Moreover, it is worthy of interest the study of primality tests for integers of special forms, see, e.g., \cite{Ramzy}.

In this paper, a novel primality test will be introduced, showing some interest from both a theoretical and practical point of view. Indeed, thanks to a tested implementation, it was seen that the test is a primality criterion under $2^{32}$, that is, the test returns zero odd pseudoprimes bigger than $4$ and lower than $2^{32}$. It also has the characteristic of having a number of operation which grows linearly with respect to the length in base $2$ of the input integer, which is rather remarkable. Moreover, from a theoretical point of view, it is based on some properties and construction involving the Pell's cubic and third--order linear recurrent sequences, which have been exploited for the first time in this context.
There are some works which characterise pseudoprimes for primality tests based on sequences of degree $3$ or higher, but none of them introduce an analogous of the test that we will present in this paper. For instance, in
\cite{PseudoprimesForSomeThirdOrderSequences}, the author only talks about third order linear recurrence sequences satisfying the relation $A_{n + 3} = rA_{n + 2} - sA_{n + 1}+A_{n}$ for any integer $n$ and initial conditions $A_{-1} = s, A_{0} = 3$ and $A_{1} = r$.
In \cite{PseudoprimesHighOrderSequences}, the author studies suitable higher-order analogues for the classical Lucas sequences and sequences which arise from resolvent polynomials.
Recently, the Pell conic has been also exploited for the definition of a primality test \cite{PrimalityTestAndPellEquation}. 

The paper is structured as follows.
Section 2.1 introduces the construction of the Pell's cubic, of its group structure and of its norm before characterizing its projectivization.
Section 2.2 highlights an isomorphism between the projectivization of a Pell's cubic and the unitary elements under the Pell's cubic norm in a polynomial ring.
Section 2.3 defines three third-order linear recurring sequences and correlates them with an exponentiation of an element inside the projectivization of the Pell's cubic.
Section 2.4 writes explicitly the Binet's formulae of the three sequences introduced in Section 2.3.

Section 3 is devoted to proving novel necessary conditions of primality, thanks to which our primality test is constructed.

Section 4.1 is dedicated to explaining the rationale regarding the choice of a parameter which is crucial for defining the primality test.
Section 4.2 delves into the variant of the square-and-multiply operation which the primality test uses.
Section 4.3 defines a novel linear primality test based on the necessary conditions seen in Section 3.

\section{On the Pell's cubic}

\subsection{The projectivization of the Pell's cubic over a finite field}
In the following, unless specified otherwise, $p>3$ will be a prime and $r$ will be a non-zero element in the finite field $\mathbb{F}_p$.
Consider the following polynomial ring:
\[
\mathcal{R}_p:= \faktor{\mathbb{F}_p[t]}{\langle t^3 - r \rangle},
\]

its elements are equivalence classes of the form
\[
[x + yt + zt^2] := \left\{ x + yt + zt^2 + k(t)(t^3-r) \mid k(t) \in \mathbb{F}_p[t] \right\}
\]
and such quotient ring inherits a scalar product (from the scalar product of $\mathbb{F}_p[t]$) together with the following operation (derived from the polynomial product in $\mathbb{F}_p[t]$):
\[
\begin{array}{c}
     \left[ x_1 + y_1 t + z_1 t^2 \right] \cdot_r \left[ x_2 + y_2 t + z_2 t^2 \right] := \\
     \left[ x_1 x_2 + r(y_1z_2 + z_1 y_2) + (x_1 y_2 + y_1 x_2 + r z_1 z_2) t + (x_1 z_2 + y_1 y_2 + z_1 x_2) t^2 \right]
\end{array}
\]

The ring $\mathcal{R}_p$ can be interpreted as a cubic number field in the case when $t^3 - r$ is irreducible and when we consider $\mathbb Q$ instead of $\mathbb F_p$. Thus, we can define a norm to be applied over an element in $\mathcal{R}_p$ as follows:
\[
N_r([x + y t + zt^2]):= x^3 - 3xyzr + y^3 r + z^3 r^2 \in \mathbb{F}_p.
\]

The above norm is obtained by multiplying the element $[x +yt + zt^2] \in \mathcal{R}_p$ with its conjugates $[x + y \omega t + z \omega^2 t^2]$ and $[x+ y \omega^2 t + z \omega t^2]$, where $\omega$ is the primitive cubic root of unity lying in a certain finite extension field of $\mathbb{F}_p$.

At this stage, it is possible to construct a trivial group isomorphism between the unitary elements of $\mathcal{R}_p$ with respect to the norm $N_r$ defined as
\[
    U(\mathcal{R}_p) := \{ [x + yt + zt^2 ] \in \mathcal{R}_p \mid N_r([x + yt + zt^2 ]) = 1 \}
\]
and the \textit{Pell cubic over $\mathbb{F}_p$}
\[
    C_p := \{ (x,y,z) \in (\mathbb{F}_p)^3 \mid x^3 - 3rxyz + ry^3 + r^2 z^3 = 1 \}
\]
Thanks to the isomorphism, the operation $\cdot_r$ of $\mathcal{R}_p$ corresponds to an operation of the Pell's cubic $C_p$ called \textit{generalised Brahmagupta product}:
\[
\begin{array}{c}
     \left( x_1, y_1, z_1 \right) \odot_r \left( x_2, y_2, z_2 \right) := \\
     \left( x_1 x_2 + r(y_1z_2 + z_1 y_2), \ x_1 y_2 + y_1 x_2 + r z_1 z_2, \ x_1 z_2 + y_1 y_2 + z_1 x_2 \right)
\end{array}
\]
for all $(x_1, y_1, z_1), (x_2, y_2, z_2) \in C_p$. Clearly,
$(C_p, \odot_r)$ is a commutative group with identity $(1,0,0)$ where the inverse of a certain element $(x,y,z)$ is given by the product of its conjugates
\[
    (x, y \omega, z \omega^2) \odot_r (x, y \omega^2, z \omega t^2) = (x^2 - ryz, rz^2 - xy, y^2 - xz).
\]

We also define the following set
\[
    I(\mathcal{R}_p) := \{ [x + yt + zt^2] \in \mathcal{R}_p \mid N_r([x + yt + zt^2]) \neq 0 \}
\]

which will now allow us to introduce a parametrization of the Pell's cubic $C_p$.

\begin{Definition}
Given a prime $p > 3$ and the set $I(\mathcal{R}_p)$, we define the \textit{projectivization of the Pell's cubic over the finite field $\mathbb{F}_p$} as
\[
\mathbb{P}_p := \faktor{I(\mathcal{R}_p)}{\mathbb{F}_p^*}
\]
The elements of $\mathbb{P}_p$ are equivalence classes denoted as follows:
\[
[x:y:z]:= \{\lambda [x + yt + zt^2] \mid \lambda \in \mathbb{F}_p^* \text{ and } [x + yt + zt^2] \in I(\mathcal{R}_p).\}
\]
If $z$ is non-zero in $\mathbb{F}_p$, then $[x + yt + zt^2]$ is equivalent to $[xz^{-1} + yz^{-1}t + t^2]$, and we take $[xz^{-1} : yz^{-1} : 1]$ as the \textit{canonical representative} of such equivalence class in $\mathbb{P}_p$.
In the case that $z$ is zero and $y$ is non-zero in $\mathbb{F}_p$, then we will take $[xy^{-1}:1:0]$ as canonical representative.
Otherwise, if both $y$ and $z$ are zero, then we take $[1:0:0]$ as the canonical representative.
\end{Definition}

Since the generalised Brahmagupta product $\odot_r$ of the Pell's cubic $C_p$ consists of homogeneous polynomials, then the following operation is well-defined over $\mathbb{P}_p$:
\begin{equation}
\label{Eq:ProjectiveProduct}
\begin{array}{c}
     \left[ x_1: y_1: z_1 \right] \otimes_r \left[ x_2 : y_2 : z_2 \right] := \\
     \left[ x_1 x_2 + r(y_1z_2 + z_1 y_2) : x_1 y_2 + y_1 x_2 + r z_1 z_2 : x_1 z_2 + y_1 y_2 + z_1 x_2 \right].
\end{array}
\end{equation}
Furthermore, it holds that the projectivization $\mathbb{P}_p$ together with $\otimes_r$ forms a commutative group with identity $[1:0:0]$ and with the inverse element of $[x:y:z]$ given by $[x^2 - ryz : rz^2 - xy : y^2 - xz]$.
In particular, the form of the inverse implies that
\begin{equation}
    \label{Eq:FormOfInverseElementInPp}
    [x:1:0] \otimes_r [x^2 : - x : 1 ] = [1:0:0] \in \mathbb{P}_p.
\end{equation}

In \cite{DuttoMurru2023}, the authors provided a fully characterisation of the quotient $\mathcal{R}_p$ and the projectivization $\mathbb{P}_p$.
We shall now quickly summarize the two cases we will use later.

First, consider a prime $p \equiv 1 \pmod 3$. It is always possible to find some non-cube elements in $\mathbb{F}_p$.
If we set $r$ as the \textit{smallest non-cube} of $\mathbb{F}_p$ in the set of canonical representatives modulo $p$, then $\mathcal{R}_p$ is a field together with $\cdot_r$ and with the operation inherited from the polynomial addition of $\mathbb{F}_p[t]$.
Also, the order of the projectivization group $(\mathbb{P}_p, \otimes_{r})$ is exactly $p^2 + p + 1$, and we have the following trivial characterization:
\begin{equation}
    \label{Eq:PointsPp1mod3}
    \mathbb{P}_p = \{[x:y:1], \ [x:1:0], \ [1:0:0] \mid x,y \in \mathbb{F}_p\}.
\end{equation}

Now consider an odd prime $p \equiv 2 \pmod 3$. Each element of $\mathbb{F}_p$ is a cube, and it holds that $\mathcal{R}_p$ is simply a ring under any choice of non-zero element $r \in \mathbb{F}_p$.
Moreover, the order of the projectivization group is $p^2 - 1$.
The point characterization in this case is more involved, and it also uses the only cubic root $s$ of the given $r$ in $\mathbb{F}_p$, where $r$ is the parameter appearing in $(\mathbb{P}_p, \otimes_r)$:
\begin{equation}
\label{Eq:PointsPp2mod3}
\begin{array}{rl}
    \mathbb{P}_p = & \{ [x:y:1],\ [x:1:0], \ [1:0:0] \mid x,y \in \mathbb{F}_p\} \\
     & \setminus \{ [-(m+s)s:m:1], \ [s^2 : s: 1], \ [-s:1:0] \mid m \in \mathbb{F}_p \}.
\end{array}
\end{equation}

From this point onward, if not specified otherwise, we will set the parameter $r \in \mathbb{F}_p$ appearing in the operations $\cdot_r, \odot_r$ and $\otimes_r$ as follows:
\begin{itemize}
    \item given a prime $p \equiv 1 \pmod 3$, then $r$ is the smallest non-cube in $\mathbb{F}_p$;
    \item otherwise, given an odd prime $p \equiv 2 \pmod 3$, we will set $r = 2 \in \mathbb{F}_p$.
\end{itemize}

The following Lemma comes directly from Equations \eqref{Eq:PointsPp1mod3} and \eqref{Eq:PointsPp2mod3} and from the fact that the cubic root of $2$ in $\mathbb{F}_p$ is clearly not $-1 \in \mathbb{F}_p$ for any odd prime $p \equiv 2 \pmod 3$.
\begin{Lemma}
    \label{Lemma:[1:1:0]LiesInPp}
    For any prime $p>3$ and for $r \not= 0, -1$, the element $[1:1:0]$ lies in $(\mathbb{P}_p, \otimes_r)$.
\end{Lemma}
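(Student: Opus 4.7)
My plan is to verify directly that the polynomial $1+t$, which is the natural representative of $[1:1:0]$, has non-zero $N_r$-norm; this immediately places $[1+t]$ in $I(\mathcal{R}_p)$ and makes the class $[1:1:0]$ well-defined in $\mathbb{P}_p = I(\mathcal{R}_p)/\mathbb{F}_p^*$. Concretely, I would substitute $(x,y,z) = (1,1,0)$ into the norm formula, obtaining
\[
N_r([1+t]) = 1^3 - 3\cdot 1\cdot 1\cdot 0\cdot r + 1^3 r + 0^3 r^2 = 1 + r,
\]
which is non-zero in $\mathbb{F}_p$ precisely because $r \neq -1$. This one-line computation would settle the lemma in full generality, independently of the congruence class of $p$ modulo $3$.

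Alternatively, following the hint given just before the lemma, I could split into cases once $r$ is specialised to the defaults fixed in the previous paragraph of the paper. For $p \equiv 1 \pmod{3}$, equation \eqref{Eq:PointsPp1mod3} enumerates $[x:1:0]$ for every $x \in \mathbb{F}_p$ as a point of $\mathbb{P}_p$, so $[1:1:0]$ would appear directly in the list. For $p \equiv 2 \pmod{3}$, characterization \eqref{Eq:PointsPp2mod3} shows that the only element of the form $[\cdot:1:0]$ removed from $\mathbb{P}_p$ is $[-s:1:0]$, where $s$ is the unique cubic root of $r$ in $\mathbb{F}_p$. If one had $[1:1:0] = [-s:1:0]$, then $s = -1$ and hence $r = s^3 = -1$, contradicting the hypothesis $r \neq -1$.

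The hard part is essentially nonexistent: the whole argument reduces to the computation $N_r([1+t]) = 1+r$. I would nevertheless prefer the direct norm approach over the case split, because it handles both congruence classes uniformly and does not rely on whether $r$ is the prescribed default value, whereas the hint-based argument implicitly uses the structural parameterisations \eqref{Eq:PointsPp1mod3}--\eqref{Eq:PointsPp2mod3}.
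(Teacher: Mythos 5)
Your proposal is correct, and your preferred argument takes a genuinely different (and more self-contained) route than the paper. The paper's justification is the sentence preceding the Lemma: it reads membership of $[1:1:0]$ off the explicit point characterizations \eqref{Eq:PointsPp1mod3} and \eqref{Eq:PointsPp2mod3}, noting for $p \equiv 2 \pmod 3$ that the excluded point $[-s:1:0]$ cannot equal $[1:1:0]$ because the cube root $s$ of $r$ is not $-1$ — i.e., exactly your second, ``hint-based'' argument. Your first argument instead goes back to the definition $\mathbb{P}_p = I(\mathcal{R}_p)/\mathbb{F}_p^*$ and checks directly that $N_r([1+t]) = 1+r \neq 0$ when $r \neq -1$, which is all that membership requires. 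This buys you two things: uniformity (no case split on $p \bmod 3$) and generality (it works for any nonzero $r \neq -1$, not only the default choices of $r$ fixed before the Lemma, whereas \eqref{Eq:PointsPp1mod3} is stated under the hypothesis that $r$ is a non-cube). What the paper's route buys is consistency with the machinery it has already imported from \cite{DuttoMurru2023}, and it makes visible \emph{which} point of the form $[\cdot:1:0]$ is missing when $p \equiv 2 \pmod 3$ — information the norm computation recovers only implicitly (the excluded $[-s:1:0]$ is precisely the norm-zero member of that family, since $N_r([-s+t]) = -s^3 + r = 0$). Either argument is complete; your one-line norm computation is arguably the cleaner proof of the statement as literally written.
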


To be precise, given an odd prime $p \equiv 2 \pmod 3$, the element $[1:1:0]$ lies in $(\mathbb{P}_p, \otimes_r)$ for any $r \in \mathbb{F}_p$. 

\subsection{Isomorphisms regarding the projectivization of the Pell's cubic}

In the next section, we will use some group isomorphisms between the projectivization $(\mathbb{P}_p, \otimes_r)$ and $(U(\mathcal{R}_p), \cdot_r)$.
The latter group is defined using the set of unitary elements of the quotient ring $\mathcal{R}_p$ with respect to the norm $N_r$ and using the restricted operation corresponding to the product $\cdot_r$ of the quotient ring itself.

In \cite{DuttoMurru2023} the authors specified isomorphisms between the Pell's cubic $(C_p, \odot_r)$ and the projectivization $(\mathbb{P}_p, \otimes_r)$, from which we can easily obtain an isomorphism between $(C_p, \odot_r)$ and $(U(\mathcal{R}_p), \cdot_r)$. 

Given a prime $p \equiv 1 \pmod 3$ and the smallest non-cube $r$ in $\mathbb{F}_p$, then the following is a group isomorphism:
\begin{equation}
\label{Eq:phi1}
    \begin{array}{rl}
        \phi_1 : & (\mathbb{P}_p, \otimes_r) \xrightarrow{} (U(\mathcal{R}_p), \cdot_r) \\
         & [x:y:z] \mapsto N([x+yt+zt^2])^{(p-4)/3} [x+yt+zt^2]^{\cdot_r 3}
    \end{array}
\end{equation}
where $N([x+yt+zt^2])$ is indeed the norm defined for any element of $\mathcal{R}_p$.
A priori, sending an element of the projectivization $\mathbb{P}_p$ into a cube of $U(\mathcal{R}_p)$ multiplied by a scalar (i.e. a certain integer power of a norm) makes the explicit writing of the inverse of $\phi_1$ quite tricky.
In the proof of Theorem \ref{Thm:KeyTheorem} will be able to maintain the cubic exponent, which means that we can write the following inverse morphism:
\begin{equation}
\label{Eq:phi1Inv}
    \begin{array}{rccl}
         \phi_1^{-1} : & (U(\mathcal{R}_p), \cdot_r) & \longrightarrow & (\mathbb{P}_p, \otimes_r)  \\
         & [x]^{3} & \mapsto & [1:0:0]  \\
         & [x+yt]^{3} & \mapsto & [x/y:1:0]  \\
         & [x+yt+zt^2]^{3} & \mapsto & [x/z:y/z:1]
    \end{array}
\end{equation}

Given an odd prime $p \equiv 2 \pmod 3$ and a non-zero element $r$ in $\mathbb{F}_p$, there exists the following group isomorphism:
\begin{equation}
\label{Eq:phi2}
    \begin{array}{rl}
        \phi_2 : & (\mathbb{P}_p, \otimes_r) \xrightarrow{} (U(\mathcal{R}_p), \cdot_r) \\
         & [x:y:z] \mapsto N([x+yt+zt^2])^{(p-2)/3} [x+yt+zt^2]
    \end{array}
\end{equation}
where $N([x+yt+zt^2])$ is the norm in $\mathcal{R}_p$ and where the inverse map is:
\begin{equation}
\label{Eq:phi2Inv}
    \begin{array}{rccl}
         \phi_2^{-1} : & (U(\mathcal{R}_p), \cdot_r) & \longrightarrow & (\mathbb{P}_p, \otimes_r)  \\
         & [x] & \mapsto & [1:0:0]  \\
         & [x+yt] & \mapsto & [x/y:1:0]  \\
         & [x+yt+zt^2] & \mapsto & [x/z:y/z:1]
    \end{array}
\end{equation}

\subsection{Some integer sequences}

By considering the product $\otimes_r$ and by considering the projective element (also called \textit{point}) $[1:1:0] \in (\mathbb{P}_p, \otimes_r)$, we can fix the following notation.

\begin{Definition}
    Given a non-negative integer $k$, $[1:1:0]^{k} \in \mathbb{P}_p$ is the \textit{$k$-th power of the point $[1:1:0]$ under $\otimes_r$ in $\mathbb{P}_p$}.
    In particular, if $k=0$, then $[1:1:0]^{0} := [1:0:0]$.
    Otherwise, if $k > 0$, then $[1:1:0]^{k}$ represents the equivalence class in $\mathbb{P}_p$ obtained by multiplying together $k$ copies of $[1:1:0]$ under $\otimes_r$.
\end{Definition}


Our aim now is to define integer sequences which are able to mimick the behaviour of each entry of a given power of the point $[1:1:0]$.
To achieve our goal, we shall use $\otimes_r$ to define the following operation between two any elements $(x_1,y_1,z_1),(x_2,y_2,z_2) \in \mathbb{Z}^3$:
\begin{equation}
\label{Eq:IntegerVectorProduct}
\begin{array}{c}
     \left( x_1, y_1, z_1 \right) \times_r \left( x_2, y_2 , z_2 \right) := \\
     \left( \ x_1 x_2 + r(y_1z_2 + z_1 y_2), \ x_1 y_2 + y_1 x_2 + r z_1 z_2, \ x_1 z_2 + y_1 y_2 + z_1 x_2 \ \right)
\end{array}
\end{equation}

\begin{Definition}
    Given a non-negative integer $k$, the vector $(x_k, y_k, z_k)_{\times_r} \in \mathbb{Z}^3$ denotes the \textit{$k$-th power of the vector $(1,1,0)$ under $\times_r$}, where by convention $(x_0, y_0, z_0)_{\times_r}:= (1,0,0)$, $(x_1, y_1, z_1)_{\times_r}:= (1,1,0)$.
\end{Definition}

The proof of the following Lemma is straightforward.

\begin{Lemma}
    Given a prime $p> 3$ and a non-negative integer $k$, fix the positive integer parameter $r$ either as $2$ when $p$ is congruent to $2$ modulo $3$, or as the smallest positive integer such that $r \,(\bmod \, p) \in \mathbb{F}_p$ is a non-cube otherwise.
    Given $(x_k, y_k, z_k)_{\times_r} \in \mathbb{Z}^3$, then the element $[x_k \,(\bmod \, p): y_k \,(\bmod \, p): z_k \, (\bmod\, p)]$ is equivalent to $[1:1:0]^{k}$ in $\mathbb{P}_p$.
\end{Lemma}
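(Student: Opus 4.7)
The plan is a direct induction on $k$, turning on the observation that the integer operation $\times_r$ defined in \eqref{Eq:IntegerVectorProduct} is given by the \emph{same} polynomial expression in the coordinates and the parameter $r$ as the projective operation $\otimes_r$ in \eqref{Eq:ProjectiveProduct}. Consequently the ring reduction $\mathbb{Z} \to \mathbb{F}_p$ commutes with the product, so the integer recurrence is simply computing an integer lift of a representative of $[1:1:0]^k$.

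For the base cases, $(x_0,y_0,z_0)_{\times_r}=(1,0,0)$ and $(x_1,y_1,z_1)_{\times_r}=(1,1,0)$ reduce mod $p$ to representatives of $[1:0:0]=[1:1:0]^0$ and $[1:1:0]=[1:1:0]^1$. For the inductive step, the defining recursion gives
\[
(x_{k+1},y_{k+1},z_{k+1})_{\times_r} = (x_k,y_k,z_k)_{\times_r} \times_r (1,1,0);
\]
reducing this identity mod $p$ and passing to projective classes, the coincidence of formulas yields
\[
[x_{k+1}:y_{k+1}:z_{k+1}] = [x_k:y_k:z_k] \otimes_r [1:1:0],
\]
and by the inductive hypothesis this equals $[1:1:0]^k \otimes_r [1:1:0] = [1:1:0]^{k+1}$, closing the induction.

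The only point that needs care is ensuring that the reduced triple $(x_k \bmod p,\, y_k \bmod p,\, z_k \bmod p)$ actually represents an element of $\mathbb{P}_p$ at every step, i.e., that it lies in $I(\mathcal{R}_p)$. For the chosen $r$ one has $r \neq 0, -1$: in the case $p \equiv 2 \pmod 3$ we take $r = 2$, and in the case $p \equiv 1 \pmod 3$ the element $-1$ is its own cube and hence always a cube in $\mathbb{F}_p$, so it cannot be the smallest non-cube. Lemma \ref{Lemma:[1:1:0]LiesInPp} then gives $[1:1:0] \in \mathbb{P}_p$, and since $(\mathbb{P}_p, \otimes_r)$ is a group, every power $[1:1:0]^k$ also lies in $\mathbb{P}_p$; multiplicativity of $N_r$ guarantees that the integer triple, reduced mod $p$, has nonzero norm. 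No substantive obstacle arises, which is why the paper notes that the proof is straightforward.
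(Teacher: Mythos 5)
Your proof is correct and fills in exactly the argument the paper leaves implicit when it calls the Lemma straightforward: the operations $\times_r$ and $\otimes_r$ are given by identical polynomial formulas, so reduction modulo $p$ commutes with the product and an induction on $k$ closes the claim. Your extra care about the reduced triples lying in $I(\mathcal{R}_p)$ (via $N_r([1+t]) = 1+r \neq 0$ and multiplicativity of the norm) is a welcome addition rather than a deviation.
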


Given the following matrix with integer entries
\[
    M:= \begin{pmatrix}
        1 & 0 & r \\
        1 & 1 & 0 \\
        0 & 1 & 1
    \end{pmatrix} \in \mathbb{Z}^{3 \times 3}
\]
the following equalities hold:
\begin{equation}
    \label{Equation:110timesxyz}
    (1,1,0) \times_r (x,y,z) = (\, x + rz,\, x + y, \, y+z \,)
\end{equation}
\[
    M \begin{pmatrix}
        x \\
        y \\
        z
    \end{pmatrix} =
    \begin{pmatrix}
        x + rz \\
        x + y \\
        y + z
    \end{pmatrix}
\]

from which we obtain the following Lemma.

\begin{Lemma}
    \label{Lemma:MatrixVSintegerVector}
    For any integer $r$ and any non-negative integer $k$, it holds that
    \[
    M^k \begin{pmatrix}
        1\\
        0\\
        0
    \end{pmatrix} = 
    \begin{pmatrix}
        x_k \\
        y_k \\
        z_k
    \end{pmatrix}_{\times_r}
    \]
\end{Lemma}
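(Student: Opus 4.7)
The plan is to prove this by a direct induction on $k$, leveraging the matrix-vector identity that is already written out right above the statement of the lemma. The proof is essentially bookkeeping, so I do not foresee a genuine obstacle; the only subtlety is in handling the base cases consistently with the definitions.

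First I would dispatch the base cases. For $k=0$ we have $M^0 = I$, so $M^0 (1,0,0)^T = (1,0,0)^T$, which equals $(x_0, y_0, z_0)_{\times_r}$ by the convention fixed in the preceding definition. For $k=1$, direct computation gives $M (1,0,0)^T = (1,1,0)^T = (x_1, y_1, z_1)_{\times_r}$, again by the defining convention.

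For the inductive step, assume that for a fixed non-negative integer $k$ we have $M^k (1,0,0)^T = (x_k, y_k, z_k)_{\times_r}^T$. Then
\begin{equation*}
M^{k+1} \begin{pmatrix} 1 \\ 0 \\ 0 \end{pmatrix}
= M \left( M^k \begin{pmatrix} 1 \\ 0 \\ 0 \end{pmatrix} \right)
= M \begin{pmatrix} x_k \\ y_k \\ z_k \end{pmatrix}
= \begin{pmatrix} x_k + r z_k \\ x_k + y_k \\ y_k + z_k \end{pmatrix},
\end{equation*}
where the last equality is the explicit formula for $M$ applied to a column vector displayed just before the lemma. By Equation \eqref{Equation:110timesxyz}, the right-hand side is exactly $(1,1,0) \times_r (x_k, y_k, z_k)$. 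Since the product $\times_r$ inherits associativity from the polynomial multiplication in $\mathbb{Z}[t]/\langle t^3 - r\rangle$ (which is the same formula defining $\times_r$ over $\mathbb{Z}^3$), the $(k+1)$-th power of $(1,1,0)$ under $\times_r$ satisfies
\begin{equation*}
(x_{k+1}, y_{k+1}, z_{k+1})_{\times_r} = (1,1,0) \times_r (x_k, y_k, z_k)_{\times_r},
\end{equation*}
and this closes the induction.

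The only step requiring a brief justification beyond the induction itself is associativity of $\times_r$, which I would note in one line by identifying $\times_r$ with the multiplication in the commutative ring $\mathbb{Z}[t]/\langle t^3 - r\rangle$ under the obvious correspondence $(x,y,z) \leftrightarrow x + yt + zt^2$; this is the same correspondence that made $\cdot_r$ and $\odot_r$ commutative and associative earlier in the paper. No additional machinery is needed.
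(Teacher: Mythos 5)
Your proof is correct and follows essentially the same route the paper intends: the paper derives the lemma directly from the two displayed identities $(1,1,0) \times_r (x,y,z) = (x+rz,\, x+y,\, y+z)$ and $M(x,y,z)^{T} = (x+rz,\, x+y,\, y+z)^{T}$, and your induction on $k$ simply makes that implicit argument explicit. Your one-line remark on associativity of $\times_r$ (via the identification with multiplication in $\mathbb{Z}[t]/\langle t^3-r\rangle$) is a reasonable and welcome addition, since the paper's definition of the $k$-th power as ``multiplying together $k$ copies'' tacitly relies on it.
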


Thanks to the last two Lemmas, if we carefully choose the parameter $r$, then there is a clear correlation between the $k$-th power of the matrix $M$ and the value of $[1:1:0]^{k}$.
It is a well-known fact that each component of the power of a square matrix with integer entries follows a linear recurrence sequence which recurs with the characteristic polynomial of the matrix itself and whose degree coincides with the number of rows (or columns) of the square matrix.
In our case, we have that the characteristic polynomial of $M$ is
\[
    f_r(t) := \det (tI_3 - M) = (t-1)^3 - r = t^3 -3t^2 +3t - (r+1)
\]

By the Cayley-Hamilton theorem, we get that
\[
    M^{k+3}
    \begin{pmatrix}
        1 \\
        0 \\
        0
    \end{pmatrix} =
    3 M^{k+2}
    \begin{pmatrix}
        1 \\
        0 \\
        0
    \end{pmatrix}
    - 3 M^{k+1}
    \begin{pmatrix}
        1 \\
        0 \\
        0
    \end{pmatrix}
    + (r + 1) M^{k}
    \begin{pmatrix}
        1 \\
        0 \\
        0
    \end{pmatrix}
\]

The above equality, together with Lemma \ref{Lemma:MatrixVSintegerVector}, directly implies that \textit{each entry of the vector $(x_k,y_k,z_k)_{\times_r}$ recurs with characteristic polynomial $f_r(t)$}.
We can now define the following three integer linear recurrence sequences:

\begin{Definition}
    Given a non-zero integer $r$ and the polynomial $f_r(t):= t^3 -3t^2 + 3t - (r+1) \in \mathbb{Z}[t]$, we denote the integer linear recurrence sequence with initial values $s_0, s_1, s_2$ and with characteristic polynomial $f_r(t)$ as $\{ P(s_0,s_1,s_2,r) \}_{i \geq 0}$.
    Then, we define the three following families of sequences (which will be called \textit{Pell-X, Pell-Y} and \textit{Pell-Z} sequences of parameter $r$, respectively):
    \[
    \begin{array}{c}
         \{ X_i(r) \}_{i \geq 0} := \{ P(1,1,1,r) \}_{i \geq 0} \\
         \{ Y_i(r) \}_{i \geq 0} := \{ P(0,1,2,r) \}_{i \geq 0} \\
         \{ Z_i(r) \}_{i \geq 0} := \{ P(0,0,1,r) \}_{i \geq 0}
    \end{array}
    \]
\end{Definition}

Thanks to all of the above consideration, we have the following proposition.

\begin{Proposition}
    \label{Proposition:SequencesVSpowersOf110}\ 
    \begin{itemize}
    \item Given a prime $p>3$ congruent to $1$ modulo $3$, let $r$ be the smallest positive integer such that $r \, (\bmod \, p)$ is the smallest non-cube in the finite field $\mathbb{F}_p$.
    Given $(x_k, y_k, z_k)_{\times r} \in \mathbb Z^3$, then
    \[
        X_k(r) = x_k, \ Y_k(r) = y_k , \ Z_k(r) = z_k
    \]
    \[
        [X_k(r) \, (\bmod \, p) : Y_k(r) \, (\bmod \, p) : Z_k(r) \, (\bmod \, p)] = [1:1:0]^{\otimes_r k} \in \mathbb{P}_p
    \]
    for all $k>0$.
    \item Given a prime $p>3$ congruent to $2$ modulo $3$ and $(x_k, y_k, z_k)_{\times 2}$, then 
    \[
        X_k(2) = x_k, \ Y_k(2) = y_k , \ Z_k(2) = z_k 
    \]
    \[
        [X_k(2) \, (\bmod \, p) : Y_k(2) \,  (\bmod \, p) : Z_k(2) \, (\bmod \, p)] = [1:1:0]^{\otimes_2 k} \in \mathbb{P}_p
    \]
    for all $k>0$.
    \end{itemize}
\end{Proposition}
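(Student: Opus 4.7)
The plan is to prove the first bullet of the proposition by comparing the two integer triples $(X_k(r),Y_k(r),Z_k(r))$ and $(x_k,y_k,z_k)_{\times_r}$ as sequences in $\mathbb{Z}^3$, show they coincide componentwise, and then invoke the Lemma stated just before the proposition to pass to the projective identity modulo $p$. Since both bullets have the same shape (the only thing that changes between the $p\equiv 1$ and $p\equiv 2$ cases is the choice of the integer $r$, whereas the recurrences and initial data are intrinsic to $r$), a single uniform argument will handle both.

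First I would observe that $\{(x_k,y_k,z_k)_{\times_r}\}$ satisfies a third-order linear recurrence with characteristic polynomial $f_r(t)=t^{3}-3t^{2}+3t-(r+1)$. This has already been established in the discussion preceding the proposition: Lemma \ref{Lemma:MatrixVSintegerVector} identifies $(x_k,y_k,z_k)_{\times_r}$ with $M^{k}(1,0,0)^{\top}$, and Cayley--Hamilton applied to $M$ gives the linear recurrence componentwise. On the Pell-X, Pell-Y, Pell-Z side, the same recurrence holds by the very definition of $\{P(s_0,s_1,s_2,r)\}_{i\ge0}$. Thus two sequences that satisfy the same order-three linear recurrence will agree for all $k\ge0$ as soon as they agree for $k=0,1,2$.

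Next I would verify the initial conditions. By convention $(x_0,y_0,z_0)_{\times_r}=(1,0,0)$ and $(x_1,y_1,z_1)_{\times_r}=(1,1,0)$, and a single application of Equation \eqref{Equation:110timesxyz} yields
\[
(x_2,y_2,z_2)_{\times_r}=(1,1,0)\times_r(1,1,0)=(1+r\cdot 0,\ 1+1,\ 1+0)=(1,2,1).
\]
Comparing with the definitions $\{X_i(r)\}=\{P(1,1,1,r)\}$, $\{Y_i(r)\}=\{P(0,1,2,r)\}$, $\{Z_i(r)\}=\{P(0,0,1,r)\}$ shows $X_i=x_i$, $Y_i=y_i$, $Z_i=z_i$ for $i=0,1,2$, and hence, by the common recurrence, for all $k$. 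This gives the first displayed equality of each bullet.

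For the projective equality, I would simply reduce the identity $X_k(r)=x_k$, $Y_k(r)=y_k$, $Z_k(r)=z_k$ modulo $p$ and apply the Lemma stated just above, which asserts that $[x_k\,(\bmod\,p):y_k\,(\bmod\,p):z_k\,(\bmod\,p)]=[1:1:0]^{k}$ in $\mathbb{P}_p$ under the specified choice of $r$ (smallest non-cube for $p\equiv 1\pmod 3$, or $r=2$ for $p\equiv 2\pmod 3$). There is no real obstacle here: the content is entirely contained in matching initial conditions, the observation that the $\times_r$-powers and the three Pell sequences share the recurrence dictated by $f_r$, and the pre-established correspondence between $(x_k,y_k,z_k)_{\times_r}$ and $[1:1:0]^{k}$ in $\mathbb{P}_p$. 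The only computation to actually carry out is the one producing $(1,2,1)$ for $(x_2,y_2,z_2)_{\times_r}$.
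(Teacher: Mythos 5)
Your proposal is correct and follows essentially the same route as the paper, which derives the Proposition directly from the preceding discussion: the common recurrence with characteristic polynomial $f_r(t)$ obtained via Lemma \ref{Lemma:MatrixVSintegerVector} and Cayley--Hamilton, the matching initial values for $k=0,1,2$ (including the computation $(x_2,y_2,z_2)_{\times_r}=(1,2,1)$), and the reduction-mod-$p$ Lemma for the projective identity. Nothing is missing.
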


The above Proposition clarifies how the Pell-X, Pell-Y and Pell-Z sequences mimick the behaviour of the power of the projective point $[1:1:0]$.
This connection between such integer sequences and $\mathbb P_p$ will be exploited to create the novel primality testing algorithm.

\subsection{Binet's formulae for the Pell-X, Pell-Y and Pell-Z sequences}

Let $\{1, \omega, \omega^2\}$ be the complex cubic roots of unity and consider the integer polynomial $t^3 - r$ for a certain positive integer $r$, whose roots are $\{\rho, \omega \rho, \omega^2 \rho\}$, given $\rho$ one of the cubic root of $r$.
Since the characteristic polynomial $f_r(t)$ of our sequences is of the form $(t - 1)^3 - r$, we have that
\(
    \{\rho + 1, \omega\rho +  1, \omega^2 \rho + 1\}
\) are the all and only pairwise distinct roots of $f_r(t)$
Using the initial conditions, it is immediate to show that the Binet's formulae for the Pell-X, Pell-Y and Pell-Z sequences are
\[ X_n(r) = \frac{(\rho + 1)^n + (\omega \rho + 1)^n + (\omega^2 \rho + 1)^n}{3} \]
\[ Y_n(r) = \frac{(\rho + 1)^n \omega + (\omega \rho + 1)^n + (\omega^2 \rho + 1)^n \omega^2}{3\omega \rho} \]
\[ Z_n(r) = \frac{(\rho + 1)^n \omega^2 + (\omega \rho + 1)^n + (\omega^2 \rho + 1)^n \omega}{3\omega^2  \rho^2} \]

Furthermore, the following congruences hold in the case that $p = 3k + 1$:
\begin{equation}
    \label{Eq:PellSequencesCongruences1mod3}
    \begin{cases}
         X_p(r) \equiv X_{p^2}(r) \equiv X_{p^2 + p}(r) \equiv 1 \, (\bmod \, p) , \cr
         Y_p(r) \equiv r^k \, (\bmod \, p), \ Y_{p^2}(r) \equiv r^{2k} \, (\bmod \, p), \ Y_{p^2 + p}(r) \equiv r^k + r^{2k} \, (\bmod \, p), \cr
         Z_p(r) \equiv Z_{p^2}(r) \equiv 0 \, (\bmod \, p) \text{ and } Z_{p^2 + p}(r) \equiv 1 \, (\bmod \, p)\
    \end{cases}
\end{equation}
If $p = 3k +2$, then these congruences hold instead:
\begin{equation}
    \label{Eq:PellSequencesCongruences2mod3}
    X_p(r) \equiv 1 \pmod p, \quad Y_p(r) \equiv 0 \pmod p,\quad Z_p(r) \equiv r^k \pmod p.
\end{equation}

Using all of the above congruences, we immediately get the following result.
\begin{Proposition}
    Given an integer sequence $\{ A_i \}_{i \geq 0}$, define the rank of appearence of a given integer $n$ as the smallest positive integer $m$ such that $n$ divides $A_m$.
    Then, the rank of appearance of a prime bigger than $3$ congruent to $1$ modulo $3$ in the Pell-Z sequence of parameter $r$ cannot be larger than the prime itself.
    Moreover, the rank of appearance of a prime bigger than $3$ congruent to $2$ modulo $3$ in the Pell-Y sequence of parameter $r$ cannot be larger than the prime itself.
\end{Proposition}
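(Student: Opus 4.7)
The plan is to observe that the Proposition is an immediate consequence of the congruences in \eqref{Eq:PellSequencesCongruences1mod3} and \eqref{Eq:PellSequencesCongruences2mod3}, which have just been asserted. Because the rank of appearance of $n$ in a sequence $\{A_i\}_{i\ge 0}$ is by definition the \emph{smallest} positive index $m$ with $n \mid A_m$, it suffices to exhibit \emph{some} positive index $m \le p$ at which the prime $p$ divides the relevant sequence term; the minimum of the nonempty set of such indices will then automatically be $\le p$ as required.

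First I would handle the case $p \equiv 1 \pmod 3$. Writing $p = 3k+1$, equation \eqref{Eq:PellSequencesCongruences1mod3} directly asserts that $Z_p(r) \equiv 0 \pmod p$. Hence $p$ appears in the Pell-Z sequence of parameter $r$ at index $p$ at the latest, so its rank of appearance is at most $p$. No further computation is needed.

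Second I would handle the case $p \equiv 2 \pmod 3$. Writing $p = 3k+2$, equation \eqref{Eq:PellSequencesCongruences2mod3} gives $Y_p(r) \equiv 0 \pmod p$, so again by definition the rank of appearance of $p$ in the Pell-Y sequence of parameter $r$ is at most $p$.

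There is essentially no obstacle in this proposition itself; the entire content has been pushed into the Binet-formula congruences preceding it. The only thing worth spelling out is that the set $\{m \ge 1 : p \mid A_m\}$ is nonempty (so the minimum exists), which is exactly what the two congruences guarantee by exhibiting $m = p$ as a witness. The only nontrivial work — verifying those congruences from the Binet formulas derived in Section 2.4 — has been carried out above the statement, so no additional argument is required here.
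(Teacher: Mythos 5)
Your proof is correct and is exactly the argument the paper intends: the paper offers no separate proof, stating only that the result follows ``immediately'' from the congruences \eqref{Eq:PellSequencesCongruences1mod3} and \eqref{Eq:PellSequencesCongruences2mod3}, which is precisely your observation that $Z_p(r)\equiv 0 \pmod p$ for $p\equiv 1\pmod 3$ and $Y_p(r)\equiv 0\pmod p$ for $p\equiv 2\pmod 3$ exhibit $m=p$ as a witness. Your extra remark that this also shows the set of witnesses is nonempty (so the minimum exists) is a small but welcome clarification the paper leaves implicit.
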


More importantly, the numerous congruences we have written just above will allow us to construct a primality test over the projectivization of the Pell's cubic $\mathbb{P}_p$ working directly with integers instead of considering projective points of $\mathbb{P}_p$.

\section{A necessary condition for primality}

Thanks to what we established in the previous Sections, we can now prove this:
\begin{Theorem}
    \label{Thm:KeyTheorem} \
    \begin{enumerate}
    \item Given a prime $p = 3k +1$ and $r$ the smallest non-cube in $\mathbb{F}_p$, then
    \[
    \begin{array}{rl}
        \langle A1. \rangle & [1:1:0]^{p} = [1:r^k:0] \in (\mathbb{P}_p, \otimes_r) \\
         \langle A2. \rangle & [1:1:0]^{p^2} = [1:r^{2k}:0] \in (\mathbb{P}_p, \otimes_r) \\
         \langle A3. \rangle & [1:1:0]^{p^2 + p} =[1:r^k:0] \otimes_r [1:r^{2k}:0] = [1:-1:1] \in (\mathbb{P}_p, \otimes_r)
    \end{array}
    \]
    \item Given a prime $p= 3k + 2$ and $r\not=0, -1$ in $\mathbb{F}_p$, then
    \[
        \begin{array}{rl}
            \langle B. \rangle & [1:1:0]^{\otimes_r p} = [1:0:r^k] \in (\mathbb{P}_p, \otimes_r) 
        \end{array}
    \]
    \end{enumerate}
\end{Theorem}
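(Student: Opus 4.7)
The plan is to reduce everything to the explicit congruences for the Pell sequences established in equations \eqref{Eq:PellSequencesCongruences1mod3} and \eqref{Eq:PellSequencesCongruences2mod3}, combined with the translation between the sequence triples and projective powers provided by Proposition \ref{Proposition:SequencesVSpowersOf110}. In both cases I would compute $[1:1:0]^{m}$ as $[X_m(r):Y_m(r):Z_m(r)]\in\mathbb{P}_p$ for $m\in\{p,p^2,p^2+p\}$, and then read off the right-hand sides from those congruences.

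For part (1), I would first establish $\langle A1\rangle$ and $\langle A2\rangle$ by directly invoking Proposition \ref{Proposition:SequencesVSpowersOf110} together with the first and third lines of \eqref{Eq:PellSequencesCongruences1mod3}, which immediately give $[X_p:Y_p:Z_p]=[1:r^k:0]$ and $[X_{p^2}:Y_{p^2}:Z_{p^2}]=[1:r^{2k}:0]$. For $\langle A3\rangle$, the same recipe yields $[1:1:0]^{p^2+p}=[1:r^{k}+r^{2k}:1]$. The nontrivial step here is the identification $r^k+r^{2k}\equiv -1\pmod p$. I would argue this by noting that since $p-1=3k$, the cubes in $\mathbb{F}_p^{*}$ form the subgroup $\{a:a^k=1\}$; because $r$ is chosen to be a non-cube, $r^k\ne 1$, while $(r^k)^3=r^{p-1}=1$, so $r^k$ is a primitive cube root of unity in $\mathbb{F}_p$. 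Hence $1+r^k+r^{2k}=0$, giving the desired identity. The equality $[1:r^k:0]\otimes_r[1:r^{2k}:0]=[1:-1:1]$ is then a direct application of \eqref{Eq:ProjectiveProduct}: the first coordinate gives $1$, the second gives $r^k+r^{2k}=-1$, and the third gives $r^{3k}=1$, matching the expression obtained from the sequence congruences.

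For part (2), the argument is shorter. Proposition \ref{Proposition:SequencesVSpowersOf110} and \eqref{Eq:PellSequencesCongruences2mod3} immediately yield $[1:1:0]^{p}=[1:0:r^k]$ as a formal triple, so the only thing to check is that this triple actually represents an element of $\mathbb{P}_p$, i.e.\ that its norm is non-zero. Using the defining formula for $N_r$, the norm of $1+r^k t^{2}$ equals $1+r^{3k+2}=1+r^{p}$, which by Fermat's little theorem reduces to $1+r$; this is non-zero under the assumption $r\ne -1$, and Lemma \ref{Lemma:[1:1:0]LiesInPp} ensures that the power on the left is a bona fide element of the group, so the equality holds in $\mathbb{P}_p$.

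The main obstacle I foresee is the justification of $r^k+r^{2k}\equiv -1$ in $\langle A3\rangle$: everything else is a bookkeeping exercise in applying the sequence congruences and the product rule \eqref{Eq:ProjectiveProduct}, but this one step genuinely uses the choice of $r$ as a non-cube rather than just a non-zero parameter. A small side remark worth including is that for $p=3k+1$ one automatically has $k$ even (since otherwise $p$ would be even), so $-1$ is a cube and the smallest non-cube $r$ is necessarily different from $-1$, which is consistent with the hypotheses needed elsewhere.
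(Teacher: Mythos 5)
Your argument is correct, but it is not the route the paper takes: it is precisely the ``alternative proof'' that the authors only sketch in Remark \ref{Remark:AlternativeProofKeyTheorem}. The paper's own proof works inside $U(\mathcal{R}_p)$ via the isomorphisms $\phi_1$ and $\phi_2$ of Equations \eqref{Eq:phi1} and \eqref{Eq:phi2}, computes the $p$-th power by the Frobenius identity $(a+bt)^p=a+bt^p$ in $\mathbb{F}_p[t]$ followed by reduction of $t^p$ modulo $t^3-r$, and derives $\langle A3.\rangle$ from the group order $p^2+p+1$ together with the explicit inverse formula \eqref{Eq:FormOfInverseElementInPp}, rather than from the identity $1+r^k+r^{2k}=0$. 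Your version is shorter and makes the role of the non-cube hypothesis transparent: the observation that $r^k=r^{(p-1)/3}$ is a primitive cube root of unity, hence $r^k+r^{2k}=-1$, is clean and correct, as is the norm computation $N_r([1+r^kt^2])\equiv 1+r$ explaining the hypothesis $r\neq -1$ in part (2). The trade-off is self-containedness: your proof leans entirely on the congruences \eqref{Eq:PellSequencesCongruences1mod3} and \eqref{Eq:PellSequencesCongruences2mod3}, which the paper asserts after the Binet formulae without proof; to close that dependency you would have to justify them by tracking the Frobenius action $\rho\mapsto\rho r^k$ (resp.\ $\rho\mapsto \rho^p$) on the roots of $t^3-r$, which is essentially the same computation the paper performs directly in $U(\mathcal{R}_p)$. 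So the two proofs are ultimately powered by the same Frobenius calculation, packaged differently; yours buys brevity and a more conceptual $\langle A3.\rangle$, the paper's buys independence from the unproved sequence congruences.
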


\begin{proof}
    For proving $\langle A1. \rangle$, consider the isomorphism $\phi_1$ of Equation \eqref{Eq:phi1} between the projectivization $(\mathbb{P}_p, \otimes_r)$ of the Pell's cubic and the multiplicative subgroup $(U(\mathcal{R}_p), \cdot_r)$.
    By the hypothesis on $r$ and by Lemma \ref{Lemma:[1:1:0]LiesInPp}, we know that $[1:1:0] \in \mathbb{P}_p$ and we have
    \[
        \phi_1([1:1:0]^{p}) = \left( N([1 + t ])^{(p-4) / 3} [1 + t]^{\cdot_r 3}\right)^{p}
    \]
    By the definition of the norm in $\mathcal{R}_p$, $N([1 + t])$ is a well-defined non-zero value in $\mathbb{F}_p$.
    Hence, let $M:= N([1 + t ])^{(p-4) / 3} \in \mathbb{F}_p$, since $\phi_1$ is an isomorphism, the rightmost element lies in $U(\mathcal{R}_p)$, which can be rewritten as follows:
    \[
        \left( N([1 + t ])^{(p-4) / 3} [1 + t]^{\cdot_r 3}\right)^{p} = \left( [M + Mt]^{p}\right)^{3} \in U(\mathcal{R}_p).
    \]
    The computation of the $p$-th power of $[M + Mt]$ in $U(\mathcal{R}_p)$ can be done by computing the $p$-th power of $M + Mt$ in the polynomial ring $\mathbb{F}_p[t]$ and by transferring the result in $U(\mathcal{R}_p)$:
    \[
        (M + Mt)^p = M + M t^p \in \mathbb{F}_p[t] \implies [M + Mt]^{\cdot_r p} = [M + M r^k t] \in U(\mathcal{R}_p)
    \]
    By the observations above and by the hypothesis of $r \neq 0$, the multiplicative inverses of $M$ and of $r^k$ are well-defined in $\mathbb{F}_p$.
    By applying the inverse isomorphism $\phi_1^{-1}$ of Equation \eqref{Eq:phi1Inv} on $[M + M r^k t]^{\cdot_r 3}$, it derives that
    \[
        [1:1:0]^{p} = [MM^{-1}r^{-k}:1:0] = [r^{-k}:1:0] = [1:r^k:0],
    \]
    from which we obtain $\langle A1. \rangle$.
    
    For proving $\langle A2. \rangle$, we can observe that
    \[
        [1:1:0]^{p^2} =([1:1:0]^{p})^{p} = [r^{-k}:1:0]^{p}
    \]
    Following analogous steps as the previous ones, first use $\phi_1$ to get:
    \[
        \phi([r^{-k}:1:0]^{p}) = \left( N([r^{-k} + t])^{(p-4) / 3} [r^{-k} + t]^{3}\right)^{p}
    \]
    Second, note that $N([r^{-k} + t])$ is non-zero, hence $C := N([r^{-k} + t])^{(p-4) / 3}$ is a well-defined non-zero element of $\mathbb{F}_p$
    Now, rewrite the rightmost element of the above Equation as:
    \[
        \left( N([r^{-k} + t ])^{(p-4) / 3} [r^{-k} + t]^{ 3}\right)^{p} = \left( [Cr^{-k} + Ct]^{p}\right)^{3} \in U(\mathcal{R}_p)
    \]
    Then, compute $(Cr^{-k} + Ct)^p$ in the polynomial ring $\mathbb{F}_p[t]$ and send the result in $U(\mathcal{R}_p)$:
    \[
        (Cr^{-k} + Ct)^p = Cr^{-k} + Ct^p \in \mathbb{F}_p[t] \Rightarrow [Cr^{-k} + Ct]^{p} = [Cr^{-k} + C r^k t] \in U(\mathcal{R}_p)
    \]
    Lastly, return in $\mathbb{P}_p$ using $\phi_1^{-1}$ on the element $[Cr^{-k} + C r^k t]^{3}$:
    \[
        [r^{-k}:1:0]^{\otimes_r p} = [Cr^{-k} C^{-1}r^{-k}: 1: 0] = [r^{-2k}:1 :0] = [1:r^{2k}:0]
    \]
    By putting everything together, we conclude the proof of $\langle A2. \rangle$.
    
    For $\langle A3. \rangle$, by the hypothesis that $r$ is a non-cube in $\mathbb{F}_p$, we have that $(\mathbb{P}_p, \otimes_r)$ is a cyclic group of order $p^2 + p + 1$.
    This means that the $(p^2 + p + 1)$-power of  $[1:1:0]$ under $\otimes_r$ in $\mathbb{P}_p$ must be equal to the identity element $[1:0:0]$ of $\mathbb{P}_p$.
   We get that $[1: r^{k}: 0] \otimes_r [1: r^{2k}:0]$ is the multiplicative inverse of $[1:1:0]$ in the projectivization, since it holds that:
    \[
        \begin{array}{c}
            [1:1:0] \otimes_r [1:r^{k} :0] \otimes_r [1: r^{2k}:0] =\\
            = [1:1:0] \otimes_r [1: 1:0]^{\otimes_r p} \otimes_r [1:1:0]^{p^2} =\\
            = [1:1:0]^{p^2 + p + 1} = [1:0:0]
        \end{array}
    \]
    Applying Equation \eqref{Eq:FormOfInverseElementInPp} with $x=1$, we get
    \[
        [1: r^k :0] \otimes_r [1: r^{2k}:0] = [1:1:0]^{-1} = [1^2 : -1: 1] = [1:-1:1] \in \mathbb{P}_p.
    \]
    We conclude the the proof of $\langle A.3 \rangle$ with a trivial rewriting of the above equalities.
    
    For the proof of $\langle B. \rangle$, a reasoning very similar to the one used in $\langle A1. \rangle$ can be constructed.
    Indeed, consider an odd prime $p = 3k +2$, there exists the isomorphism $\phi_2$ which sends elements of $(\mathbb{P}_p, \otimes_r)$ into elements of $(U(\mathcal{R}_p), \cdot_r)$ (see Equation \eqref{Eq:phi2}).
    Using this isomorphism, it holds that
    \[
        \phi_2([1:1:0]^{p}) = \left( N([1 + t])^{(p-2) / 3} [1 + t] \right)^{p}
    \]
    By definition of the norm, $N([1 + t])$ is a non-zero element in $\mathbb{F}_p$, hence $D := N([1 + t])^{(p-2) / 3}$ is a well-defined non-zero element of $\mathbb{F}_p$ which has its multiplicative inverse $D^{-1}$ well-defined in $\mathbb{F}_p$.
    Hence, using the scalar product of $\mathcal{R}_p$, we can rewrite the rightmost element of the last equality as
    \[
    \left( N([1 + t])^{(p-2) / 3} [1 + t] \right)^{p} = [D + Dt]^{p} \in U(\mathcal{R}_p)
    \]
    and we also have
    \[
    (D + Dt)^p = D + Dt^p \in \mathbb{F}_p[t] \Rightarrow [D + Dt]^{p} = [D + D r^k t^2] \in U(\mathcal{R}_p).
    \]
    By hypothesis $r$ is non-zero in $\mathbb{F}_p$, hence the element $r^{-k}$ is well-defined in $\mathbb{F}_p$.
    Return in $\mathbb{P}_p$ using $\phi_2^{-1}$ (Equation \eqref{Eq:phi2Inv}) on the element $[D + Dr^k t^2]$, simplify the terms and write the result in a non-canonical form with a scalar multiplication by $r^{k}$ to get
    \[
    [1:1:0]^{\otimes_r p} = [D D^{-1}r^{-k}: 0: 1] = [r^{-k}: 0:1] = [1:0:r^k]
    \]
    The above chain of equalities proves $\langle B. \rangle$.
\end{proof}

\begin{Remark}
\label{Remark:AlternativeProofKeyTheorem}
    An alternative proof of Theorem \ref{Thm:KeyTheorem} can be constructed considering the connection between the Pell integer sequences and the powers of $[1:1:0]$ under $\otimes_r$ in $\mathbb{P}_p$ (seen in Proposition \ref{Proposition:SequencesVSpowersOf110}) and the congruences in Equations \eqref{Eq:PellSequencesCongruences1mod3} and \eqref{Eq:PellSequencesCongruences2mod3}.
   
\end{Remark}

Using the equalities of Theorem \ref{Thm:KeyTheorem} and the congruences for the integer sequences in Equations \eqref{Eq:PellSequencesCongruences1mod3} and \eqref{Eq:PellSequencesCongruences2mod3}, we get the following necessary primality conditions.
\begin{Theorem}
\label{Definition:NecessaryPrimalityConditions} \
\begin{enumerate}
\item Given a prime $p = 3k + 1$ and given the smallest non-cube $r$ in $\mathbb{F}_p$, then these four conditions must hold for the vector $(x_p, y_p, z_p)_{\times_r}$:
    \begin{itemize}
        \item $x_p \equiv 1 \,(\bmod \,p)$;
        \item $y_p \equiv r^k \,(\bmod \,p)$;
        \item $z_p \equiv 0 \,(\bmod \,p)$;
        \item $y_p + y_p^2 \equiv -1 \,(\bmod \,p)$;
    \end{itemize}
\item Given a prime $p = 3k + 2$, then these three conditions must hold for the vector $(x_p, y_p, z_p)_{\times_2}$:
    \begin{itemize}
        \item $x_p \equiv 1 \,(\bmod \,p)$;
        \item $y_p \equiv 0 \,(\bmod \,p)$;
        \item $z_p \equiv 2^k \,(\bmod \,p)$;
    \end{itemize}
    \end{enumerate}
\end{Theorem}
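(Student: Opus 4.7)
The strategy is to combine Proposition \ref{Proposition:SequencesVSpowersOf110}, which identifies the integer entries of $(x_p, y_p, z_p)_{\times_r}$ with the Pell sequence values $X_p(r)$, $Y_p(r)$, $Z_p(r)$, with the explicit congruences stated in Equations \eqref{Eq:PellSequencesCongruences1mod3} and \eqref{Eq:PellSequencesCongruences2mod3}. The theorem statement itself announces this as the rationale, so most of the machinery is already in place and what remains is bookkeeping plus one small piece of Galois-theoretic content.

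First I would handle case (2), which is the cleanest. For an odd prime $p = 3k+2$ the fixed convention sets $r = 2$, and Proposition \ref{Proposition:SequencesVSpowersOf110} gives $X_p(2) = x_p$, $Y_p(2) = y_p$, $Z_p(2) = z_p$ as equalities of integers. Reducing modulo $p$ and invoking the three congruences of Equation \eqref{Eq:PellSequencesCongruences2mod3} yields exactly $x_p \equiv 1$, $y_p \equiv 0$, $z_p \equiv 2^k \pmod p$, finishing this case.

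For a prime $p = 3k+1$, the same application of Proposition \ref{Proposition:SequencesVSpowersOf110} (with $r$ the smallest non-cube in $\mathbb{F}_p$) together with the first three lines of Equation \eqref{Eq:PellSequencesCongruences1mod3} immediately produces the three conditions $x_p \equiv 1$, $y_p \equiv r^k$, $z_p \equiv 0 \pmod p$. The only condition that needs an extra step is the fourth: $y_p + y_p^2 \equiv -1 \pmod p$. Since $y_p \equiv r^k \pmod p$, this is equivalent to proving $r^{2k} + r^k + 1 \equiv 0 \pmod p$.

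The (mild) main obstacle is this last identity. I would argue that $r^k$ is a primitive cubic root of unity in $\mathbb{F}_p$: Fermat gives $r^{3k} = r^{p-1} \equiv 1 \pmod p$, so $r^k$ is a cubic root of unity, and the non-cube hypothesis on $r$ rules out $r^k \equiv 1$, because in the cyclic group $\mathbb{F}_p^*$ of order $3k$ the cubes form the unique index-$3$ subgroup, which is exactly the kernel of $x \mapsto x^k$. A primitive cubic root of unity satisfies $t^2 + t + 1 = 0$, from which the fourth condition follows at once. Alternatively, and more in the spirit of Theorem \ref{Thm:KeyTheorem}, one may compute the Brahmagupta product $[1:r^k:0] \otimes_r [1:r^{2k}:0]$ directly from Equation \eqref{Eq:ProjectiveProduct}, observe that its third coordinate equals $r^{3k} \equiv 1 \pmod p$ so no rescaling is needed, and then compare it with the value $[1:-1:1]$ delivered by $\langle A3. \rangle$ to read off $r^k + r^{2k} \equiv -1 \pmod p$.
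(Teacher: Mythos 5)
Your proposal is correct, and for the first three conditions of case (1) and all of case (2) it coincides with the paper's proof: both simply combine Proposition \ref{Proposition:SequencesVSpowersOf110} with the congruences \eqref{Eq:PellSequencesCongruences1mod3} and \eqref{Eq:PellSequencesCongruences2mod3}. Where you genuinely diverge is the fourth condition $y_p + y_p^2 \equiv -1 \pmod p$. The paper proves it in two stages: first it shows $y_{p^2+p} \equiv y_p + y_{p}^2 \pmod p$ by expanding the $y$-component of $(x_p,y_p,z_p)_{\times_r} \times_r (x_{p^2},y_{p^2},z_{p^2})_{\times_r}$ and using $x_p \equiv x_{p^2} \equiv 1$, $z_p \equiv 0$, $y_{p^2} \equiv r^{2k} \equiv y_p^2$; then it shows $y_{p^2+p} \equiv -1$ by invoking $\langle A3.\rangle$ of Theorem \ref{Thm:KeyTheorem} and pinning down the hidden projective scalar $\lambda$ via $x_{p^2+p} \equiv z_{p^2+p} \equiv 1$. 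You instead substitute $y_p \equiv r^k$ and observe that $r^k$ is a primitive cube root of unity in $\mathbb{F}_p$ (it satisfies $(r^k)^3 = r^{p-1} \equiv 1$ by Fermat, and $r^k \not\equiv 1$ by the cubic Euler criterion since $r$ is a non-cube), so $r^{2k} + r^k + 1 \equiv 0$ at once; your verification that the cubes are exactly the kernel of $x \mapsto x^{(p-1)/3}$ in the cyclic group $\mathbb{F}_p^*$ is correct. Your argument is shorter, avoids the $(p^2+p)$-th powers and the hidden-scalar bookkeeping entirely, and is notable in light of Remark \ref{Remark:TwoPointsOfViewAndHiddenScalars}, which suggests the $-1$ congruence is hard to reach from the sequence side alone --- it is in fact elementary once $y_p \equiv r^k$ is in hand. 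What the paper's longer route buys is the intermediate identity $y_{p^2+p} \equiv y_p + y_p^2$ and a demonstration of how the projectivization viewpoint resolves scalar ambiguities, both of which inform the design of the test; your fallback computation of $[1:r^k:0] \otimes_r [1:r^{2k}:0] = [1:r^k+r^{2k}:r^{3k}]$ compared against $[1:-1:1]$ is also valid and is essentially a streamlined version of the paper's second stage.
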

\begin{proof}
Except the necessary condition $y_p + (y_p)^2 \equiv -1 \,(\bmod \,p)$, all the other necessary conditions can be directly derived by considering Proposition \ref{Proposition:SequencesVSpowersOf110} and Equations \eqref{Eq:PellSequencesCongruences1mod3} and \eqref{Eq:PellSequencesCongruences2mod3}.

Let us focus on $y_p + y_p^2 \equiv -1 \,(\bmod \,p)$.
First, we will prove that $y_{p^2 + p} \equiv y_{p} + y_{p}^2 \, (\bmod \, p)$ and then we will prove that $y_{p^2 + p} \equiv -1 \, (\bmod \, p)$.

By the definition of $\times_r$ (see Equation \eqref{Eq:IntegerVectorProduct}) and of the powers of $(1,1,0)$ under $\times_r$, $(x_{p}, y_{p}, z_{p})_{\times_r} \times_r (x_{p^2}, y_{p^2}, z_{p^2})_{\times_r} = (x_{p^2 + p}, y_{p^2 + p}, z_{p^2 + p})_{\times_r}$.
By computing the $y$-component of the left-hand side of this last equality, we derive that $y_{p^2 + p} = x_p y_{p^2} + y_p x_{p^2} + r z_p z_{p^2}$.
Using this last equality, Proposition \ref{Proposition:SequencesVSpowersOf110} and Equation \eqref{Eq:PellSequencesCongruences1mod3}, we get $y_{p^2 + p} \equiv x_p y_{p^2} + y_p x_{p^2} + r z_p z_{p^2} \equiv (1 \cdot y_{p^2}) + (y_p \cdot 1) + 0 \equiv y_p + y_{p^2} \, (\bmod \, p)$.
The congruence of the first part is proved by both the use of the leftmost and rightmost term of the last chain of congruences and by the fact that $y_{p^2} \equiv (y_p)^2 \,(\bmod \,p)$ (indeed, $(r^k)^2 = r^{2k}$, and these two chain of congruences hold thanks to Proposition \ref{Proposition:SequencesVSpowersOf110} and Equation \eqref{Eq:PellSequencesCongruences1mod3}: $r^k \equiv Y_p(r) \equiv y_p \,(\bmod \,p)$ and $(r^k)^2 \equiv r^{2k} \equiv Y_{p^2}(r) \equiv y_{p^2} \,(\bmod \,p)$).

By Proposition \ref{Proposition:SequencesVSpowersOf110} and Theorem \ref{Thm:KeyTheorem}, we have that there exists a non-zero $\lambda \in \mathbb{F}_p$ such that $(x_{p^2 + p}, y_{p^2 + p}, z_{p^2 + p})_{\times_r} \equiv (\lambda, -\lambda, \lambda)  \,(\bmod \,p)$.
Yet, we can use Proposition \ref{Proposition:SequencesVSpowersOf110} and Equation \eqref{Eq:PellSequencesCongruences1mod3} to derive that $x_{p^2 + p} \equiv 1 \,(\bmod \,p)$ and that $z_{p^2 + p} \equiv 1 \,(\bmod \,p)$.
These two congruences force the value of $\lambda$ to be equal to $1$.
By substituting the value of $\lambda$ with $1$ in the vector congruence and by extracting the $y$-component of this vector congruence, we get that $y_{p^2 + p} \equiv -1 \,(\bmod \,p)$.
\end{proof}

\begin{Remark}
    \label{Remark:TwoPointsOfViewAndHiddenScalars}
    Let us highlight the importance of having two points of view which we used to construct the necessary conditions, one regarding the Pell integer sequences and another one regarding the projectivization of the Pell's cubic.
    By using strictly the Binet's formulae of the integer sequences, we can easily prove only the congruence $y_p + y_p^2 \equiv r^k + r^{2k} \,(\bmod \,p)$, whereas the necessary condition $y_p + y_p^2 \equiv -1 \,(\bmod \,p)$ seems quite tough to prove.
    Yet, this necessary condition allows the novel primality test to become a primality criterion under $2^{32}$.
    Also, not having the integer vector operation $\times_r$ directly derived from the product $\otimes_r$ would result in a less generalisable, less readable and arguably slower implementation of the novel primality test.
    On the other hand, if we implement the novel primality test using purely computations of the powers of the projective point $[1:1:0]$ under $\otimes_r$ in $\mathbb{P}_p$, extra care is required while checking the values of the powers of $[1:1:0]$ against the expected values provided by $\langle A1. \rangle$, $\langle A2. \rangle$, $\langle A3. \rangle$ and $\langle B. \rangle$ of Theorem \ref{Thm:KeyTheorem}.
    Indeed, by definition of equivalence classe in $\mathbb{P}_p$, the point $[x : y : z] \in \mathbb{P}_p$ coincides with any point of the form $[\lambda x : \lambda y : \lambda z]$ where $\lambda$ is a non-zero "hidden" scalar in $\mathbb{F}_p$.
    If we impose extremely strict checks on each component of the $p$-th power of $[1:1:0]$ in an implementation without representing both the resulting point of the computation and the expected point in canonical form, we risk either the misclassification of some primes or a bigger amount of pseudoprimes.
    Moreover, even if we represent these points in canonical form by adding computational complexity, the purely-projectivization point of view gives us zero assurance that the comparison of these two points component-wise is justified.
    There may be a prime $p$ which is misclassified by an implementation of the primality test because $p$ passes the extremely strict checks only under some unpredictable non-canonical form representations. 
    The shown necessary conditions present no hidden scalars to worry about and require no transformations into canonical form, since the Pell integer sequences lead to precise modular-congruence checks in the novel primality test.
\end{Remark}

\section{The primality test}

We are close to presenting a working implementation of our primality test.
But first, it is crucial to spend some words on a handful of details.

\subsection{Choosing the parameters}
Let $p$ be an odd prime congruent to $1$ modulo $3$.
As we said earlier, it is always possible to find non-cubes in $\mathbb{F}_p$, and the parameter $r$ is required to be non-cube to actually construct the projectivization of the Pell's cubic without falling into an edge case.
To mimimize execution times, we are very interested in fixing $r$ to be the smallest value possible, so that overall the product $\times_r$ can be as fast as possible.
A very trivial method of finding the smallest non-cube in $\mathbb{F}_p$ is selecting the smallest positive integer $r$ such that the generalised Euler criterion fails, i.e., such that $r^{(p-1)/3}$ reduced modulo $p$ is not $1$.
Thus, a procedure which checks, in order, numbers between $2$ and $p-1$ (note that $1$ is a cube of $1$ in $\mathbb{F}_p$) until it returns the first number $r$ such that $r^{(p-1)/3} \not \equiv 1 \, (\bmod \, p)$ is always bound to return the smallest non-cube in $\mathbb{F}_p$.

\begin{Remark}
    \label{Remark:TweakedMethodToFindNonCube}
    In an actual implementation of the primality test, it is advised to use a \textit{tweaked procedure to find the parameter $r$} given, as input, an odd integer $n>3$ which is congruent to $1$ modulo $3$ (whose primality is unknown).
    To speed the computation up, the tweaked procedure can check, in order, only each prime between $2$ and $997$ before eventually checking any integer between $998$ and $n-1$.
    Indeed, it is quite evident that a product of two cubes is still a cube, which means that checking only prime candidates for $r$ is sufficient.
    The fact that the tweaked procedure checks all integers after $997$ leads to less memory usage in the implementation.
    Moreover, the tweaked procedure must account for a possible edge case where zero non-cubes are found for a composite input $n$.
    Indeed, for a prime input $p$, checking in order each prime below $p-1$ must return a smallest non-cube as an output.
    Yet, there may exist a composite $n$ for which every non-zero $x$ less than $n-1$ satisfies $x^{(p-1)/3} \equiv 1 \, (\bmod \, n)$. If such case happens, the procedure declares the input $n$ as "composite".
\end{Remark}

\begin{Remark}
    An implementation of the tweaked method written in Remark \ref{Remark:TweakedMethodToFindNonCube} using as input, in order, each element of the odd integer sequence $7,13,19,25,\ldots$ (where each number is congruent to $1$ modulo $3$ and less than $2^{26}$) performs as follows: a parameter $r$ is always found; $2$ is returned $94$ percent of the times; $3$ is chosen around $4$ percent of the times; $5$ is chosen around $1$ percent of the times; each prime up to $163$ included are chosen with a negligible percentage (less than $0.5$ percent of the time).
    It appears as if the tweaked procedure will never declare an input "composite", but further studies must be done before claiming anything for sure.
\end{Remark}

On the other hand, let now $p$ be an odd prime congruent to $2$ modulo $3$.
As shown in Section 3, the necessary conditions (Theorem \ref{Definition:NecessaryPrimalityConditions}) regarding a prime falling in this category for the primality test are written considering $r=2$.
From a purely mathematical perspective, $\mathbb{P}_p$ under $\otimes_u$ is a cyclic group of cardinality $p^2 -1$ for any non-zero $u$ in $\mathbb{F}_p$.
Yet, considering the implementation perspective, not only are we bound to avoid $-1$ as a parameter for the product operation in $\mathbb{P}_p$ when we compute the powers of $[1:1:0]$, but it is of our best interest to choose such parameter to be rather small, so that the execution becomes faster using as little values as possible.
Fixing $r=2$ means that the multiplication by $2$ can be efficiently implemented as a bitshift.

\subsection{The Square-and-Multiply operation}

Because of the form of the necessary conditions of Theorem \ref{Definition:NecessaryPrimalityConditions}, we want to find a way to compute the vector $(x_p, y_p, z_p)_{\times_r}$ and to reduce each component modulo $p$. 
One way to compute these quantities consists in using a variant of the so-called square-and-multiply algorithm.
Instead of applying $p$ times the operation $(1,1,0) \times_r (x,y,z)$ as seen in Equation \eqref{Equation:110timesxyz} (where $x,y,z$ are generic integer values), we use the binary representation of $p$ to compute $(x,y,z) \times_r (x,y,z)$ around $\log_2(p)$ times while adjusting the operation along the way.
More precisely, given generic integer values $(x,y,z)$, Equation \eqref{Eq:IntegerVectorProduct} implies that
\begin{equation}
    \label{Equation:Squaringxyz}
    (x,y,z) \times_r (x,y,z) = (x^2 + 2ryz, 2xy + rz^2, 2xz + y^2)
\end{equation}
Hence, the following generic algorithm can be defined.

\begin{Algorithm}
    \label{Definition:SquareAndMultiplyMethod}
    Given an odd integer $n>3$, given a non-zero parameter $r$ and given the positive integer $e$, if $\sum_{i=0}^{k-1}b_i 2^i$ is the $k$-bit long binary representation of the integer $e$, then apply the following steps.
    \begin{itemize}
        \item Let $x = y = 1$ and $z = 0$.
        \item In order, fix the index $i$ as each of the values $k - 2, k-3,\dots,0$.
        For each choice of $i$, repeat the actions inside the following point.
        \item  Using Equation \eqref{Equation:Squaringxyz} as a guide, update $x,y,z$ with the reduction modulo $n$ of the values $x^2 + 2ryz$, $2xy + rz^2$ and $2xz + y^2$, respectively.
        At this stage, if it happens that $b_i = 1$, then use Equation \eqref{Equation:110timesxyz} to update $x,y,z$ again with the reduction modulo $n$ of the values $x + rz$, $x + y$ and $y + z$, respectively.
        \item After cycling through each of the $k-1$ values of the index $i$ and updating the values of $x,y,z$ as specified in the previous points, we get that $(x,y,z) \equiv (x_e,y_e,z_e)_{\times_r} \,(\bmod \, n)$, where the notation of the "vector congruence" is a shorthand to indicate that the congruences modulo $n$ hold component-wise.
    \end{itemize}
\end{Algorithm}

\begin{Remark}
    From an implementation standpoint, computing the powers of the point $(1,1,0)$ one at a time is immensely slower than the method proposed in Algorithm \ref{Definition:SquareAndMultiplyMethod}.
    Note also these two alternative but slower variants of the square-and-multiply algorithm.
    The first one traverses the binary representation of the exponent from $i=0$ to $i = k-1$.
    It computes procedurally each of the $2^i$-th power of $(1, 1, 0)$ at the end of each loop, and it updates $x,y,z$ using the current available power only when the $i$-th bit in the binary of the exponent is $1$.
    But this "reverse" variant is slower than the one seen in Algorithm \ref{Definition:SquareAndMultiplyMethod} because the former uses a slower generic product instead of the nimble Equation \eqref{Equation:110timesxyz}.
    The second variant (see Algorithm 7.6 of \cite{CryptographyTheoryAndPractice}) uses point inverses and the non-adjacent form (NAF) representation of a binary number.
    It ends up being slightly slower and much more complex than the method shown earlier.
\end{Remark}

\begin{Remark}
    We could be tempted to compute the components reduced modulo $n$ of the vector $(x_p, y_p, z_p)_{\times_r}$ using linear order sequences algorithms.
    Yet, after testing, it appears that Algorithm \ref{Definition:SquareAndMultiplyMethod} is faster.
    For example, Algorithm 2 of \cite{ComputingLinRecurrSequences} (which is based on the powers of the $3 \times 3$ matrix associated to a third-order integer sequence) could be used as a base to define a method to compute the $p$-th power of $(1,1,0)$.
    But computing the powers of a matrix using a square-and-multiply algorithm turns out to be slower than the method shown in Algorithm \ref{Definition:SquareAndMultiplyMethod}, mainly because the update steps taken for the power of the matrix have more operations in them with respect to the simple squaring and multiplying by $(1,1,0)$ operations we find in Equations \eqref{Equation:110timesxyz} and \eqref{Equation:Squaringxyz}.
    Instead, \cite{ManipulatingThirdOrderSequences} talks about a multistep method which turns a third order linear sequence into an equivalent one.
    It is likely that the process of having first a manipulation procedure to obtain an equivalent sequence and second computations over this equivalent sequence is slower than the process of applying directly the square-and-multiply method of Algorithm \ref{Definition:SquareAndMultiplyMethod}.
\end{Remark}

\subsection{The novel primality test}

We can now define the following primality test.

\begin{Algorithm}
    \label{Definition:LinearPrimalityTest}
    Let $n > 3$ be an input odd integer whose primality is unknown.
    Assume that it is not divisible by $3$, otherwise $n$ is surely composite.
    Let $k$ be the quotient of the integer division of $n$ by $3$.
    Apply the following steps.
    \begin{itemize}
        \item If $n$ is congruent to $2$ modulo $3$, compute $(x_n,y_n,z_n)_{\times_2}$ using the method in Definition \ref{Definition:SquareAndMultiplyMethod}.
        If it holds that $x_n \equiv 1 \,(\bmod \, n)$, that $y_n \equiv 0 \,(\bmod \, n)$ and that $z_n \equiv 2^k \,(\bmod \, n)$, then $n$ is declared prime. \\
        If one of these three conditions is false, then $n$ is composite.
        \item Otherwise $n \equiv 1 \,(\bmod \, 3)$, so search the parameter $r$ as in Remark \ref{Remark:TweakedMethodToFindNonCube}.\\
        If no parameter is found, then $n$ is surely composite. \\
        If $r$ is found, then compute $(x_n,y_n,z_n)_{\times_r}$ using the method in Definition \ref{Definition:SquareAndMultiplyMethod}. \\
        If it holds that $x_n \equiv 1 \,(\bmod \, n)$, that $y_n \equiv r^k \,(\bmod \, n)$, that $z_n \equiv 0 \,(\bmod \, n)$ and that $-1 \equiv y_n + y_n^2 \,(\bmod \, n)$, then $n$ is declared prime. \\
        If one of these four conditions is false, then $n$ is composite.
    \end{itemize}
\end{Algorithm}

    This primality test performs rather well in practice. Indeed, it is a \textbf{primality criterion below $2^{32}$}, that is, no composites below $2^{32}$ are declared primes by the test.
    This claim has been proved by implementing the test in Python, and by comparing the output of the implemented test with the output of the Sympy 1.12 \textit{isprime(n)} function (whose answer, as stated in the corresponding documentation, is definitive for $n < 2^{64}$). For the implementation of the test, see the public GitHub repository: \url{https://github.com/Luckydd99/Novel_Performant_Primality_Test.git}.

\begin{Remark}
    \label{Remark:FourthConditionIsNecessary}
    If we use the above test without the necessary primality condition $-1 \equiv y + y^2 \,(\bmod \, n)$ proved in Theorem \ref{Definition:NecessaryPrimalityConditions}, the integers $6189121$, $12262321$ and $14469841$ are the pseudoprimes below $2^{25}$.
\end{Remark}

    If we take inspiration from the approach seen in the Bailey-PSW primality test, we can even construct the following \textbf{strong primality test} based on the projectivization of the Pell's cubic, which also runs faster on average:
    \begin{itemize}
        \item Let $n > 2$ be the input odd integer.
        \item If $n$ coincides with one of the odd primes below $1000$, then $n$ is prime.
        \item If one of the odd primes below $1000$ divides $n$, then $n$ is composite.
        \item If $n$ is not a Fermat probable prime in base $2$ (that is, if $n$ is such that $2^{n-1} \not \equiv 1 \,(\bmod \, n)$), then $n$ is composite.
        \item If $n$ is classified composite by Algorithm \ref{Definition:LinearPrimalityTest}, then $n$ is composite.
        \item Otherwise, $n$ is almost certainly prime.
    \end{itemize}
    Under testing, it was seen that each odd pseudoprime smaller than $2^{37}$ for the Fermat test in base 2 was classified as composite by the linear test of Algorithm \ref{Definition:LinearPrimalityTest}.

A possible future research direction for the test presented in Algorithm \ref{Definition:LinearPrimalityTest} is trying to say if it is actually a criterion or not, and in the second case find a characterization for the pseudoprimes and results about their distribution.
One could try to check if there are any regularities or irregularities for composite integers $n$ when analysing the component of the vector $(x_n, y_n, z_n)_{\times_r}$ which should be congruent to the power of the parameter $r$.

\end{document}